\documentclass[11pt]{article}
% ------------------- 导言区 -------------------
\usepackage{amsmath,amssymb,amsthm}
\usepackage{mathrsfs}
\usepackage{enumerate}
\usepackage{hyperref}
\usepackage{xcolor}
\newtheorem{theorem}{\bf Theorem}[section]
\newtheorem{lemma}[theorem]{\bf Lemma}

\newtheorem{corollary}[theorem]{\bf Corollary}
\theoremstyle{definition}

\newtheorem{remark}[theorem]{\bf Remark}

\begin{document}
\title{Quantitative Stability of Two Weakly Interacting Kinks for the Stationary $\phi^6$ Model}
\author{Xin Liao\\xin\_liao@whu.edu.cn}
\maketitle
\begin{abstract}
Let $H_{0,1}(x)$ and $H_{-1,0}(x)$ denote the kink solutions of the stationary $\phi^6$ model
\[
-\phi''(x)+2\phi(x)-8\phi^3(x)+6\phi^5(x)=0, \quad x\in \mathbb{R}.
\]
In this paper, we establish  quantitative stability estimates for configurations close to two weakly interacting kinks. More precisely, we show that there exist constants $a>0$ and $\varepsilon>0$ such that for any function $u\in L^{\infty}$, if $x_2-x_1>a$ and
\[
\|u - H_{0,1}(\cdot+x_1) - H_{-1,0}(\cdot+x_2)\|_{H^1} < \varepsilon,
\]
then the following estimate holds:
\[
\inf_{y_1,y_2} \| u - H_{0,1}(\cdot+y_1) - H_{-1,0}(\cdot+y_2) \|_{H^1} \lesssim \| \mathscr{F}(u)\|_{L^2},
\]
where 
\[
\mathscr{F}(u) := u'' - 2u + 8u^3 - 6u^5.
\]
\noindent
\textbf{Keywords:} $\phi^6$ model, kinks,  quantitative estimates.

\noindent
\textbf{MSC (2020):} 35B35, 35Q51, 35B40, 35Q53
\end{abstract}

\section{Introduction}
\subsection{Background}
The $\phi^6$ model equation is a nonlinear wave equation given by
  \begin{equation}\label{phi6}
\partial_{tt}\phi(t,x)-\partial_{xx}\phi(t,x)+2\phi(t,x)-8\phi^3(t,x)+6\phi^5(t,x)=0
  \end{equation}
  for $(t,x)\in \mathbb{R}\times \mathbb{R}$.
  The associated potential energy
 $E_{pot}$ ,  kinetic energy $E_{kin}$ and total energy $E_{tot}$   are defined as
\begin{align}
&E_{pot}(\phi)=\frac{1}{2}\int_{\mathbb{R}} \partial_{x}\phi(t,x)^2 dx +\int_{\mathbb{R}}\phi(t,x)^2(1-\phi(t,x)^2)^2 dx,\\
&E_{kin}(\phi)=\frac{1}{2}\int_{\mathbb{R}} \partial_{t}\phi(t,x)^2 dx,\\
&E_{tot}(\phi,\partial_{t}\phi)=\frac{1}{2}\int_{\mathbb{R}} [\partial_{x}\phi(t,x)^2+ \partial_{t}\phi(t,x)^2] dx +\int_{\mathbb{R}}\phi(t,x)^2(1-\phi(t,x)^2)^2 dx.
\end{align}
The  constant solutions of \eqref{phi6} in the energy space are $\phi = \pm 1$ and $\phi = 0$. 
It is known that the only non-constant stationary solutions with finite total energy are topological solitons, called kinks and anti-kinks; for more details, see Chapter 5 of \cite{Manton2004}.  
Up to translations, the kinks are given by
\begin{equation}
H_{0,1}(x) = \frac{e^{\sqrt{2}x}}{\bigl(1+e^{2\sqrt{2}x}\bigr)^{1/2}}, \quad
H_{-1,0}(x) = -\frac{e^{-\sqrt{2}x}}{\bigl(1+e^{-2\sqrt{2}x}\bigr)^{1/2}},
\end{equation}
and the anti-kinks are $-H_{0,1}(x)$ and $H_{0,1}(-x)$. 

 Define the set 
\begin{equation}
S:=\{u\in L^{\infty}\mid u-H_{0,1}(x)-H_{-1,0}(x)\in H^1 \}.
\end{equation}
In \cite{Moutinho}, A. Moutinho  have proved that 
\begin{theorem}\label{thmA}
There exist $\delta_0>0$, such that if $\varepsilon<\delta_0$, and 
\[ (\phi(0), \partial_{t}\phi(0))\in S\times L^2 \]
with $E_{t}(\phi(0),\partial_{t}\phi(0))=2E_{p}(H_{0,1})+\varepsilon$, then there exist functions $y_1,y_2\in C^2(\mathbb{R})$ such that for all $t\in \mathbb{R}$, the unique global time solution $\phi(t,x)$ of  equation \eqref{phi6} is given by 
\begin{equation}
\phi(t)=H_{0,1}(x+x_1(t))+H_{-1,0}(x+x_2(t))+g(t)
\end{equation}
satisfying
\begin{equation}
e^{-\sqrt{2}(x_2(t)-x_1(t))}+\|g(t)\|_{H^1}^2\lesssim \varepsilon,\quad \forall t\in\mathbb{R}.
\end{equation}
\end{theorem}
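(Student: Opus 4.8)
The plan is to prove orbital stability by combining the conservation of the total energy $E_{tot}$ with a modulation decomposition along the flow and a coercivity estimate for the linearized energy, closed by a continuity (bootstrap) argument. The decisive structural fact is that $E_{tot}$ is conserved: hence it suffices to show that at each fixed time the excess $\varepsilon = E_{tot}(\phi(t),\partial_t\phi(t)) - 2E_{pot}(H_{0,1})$ simultaneously controls the fluctuation $g(t)$ and the kink interaction, and the global-in-time statement follows because the controlling quantity does not depend on $t$. As preliminaries I would first record global well-posedness in the affine energy space $S\times L^2$ (the $\phi^6$ nonlinearity is energy-subcritical and the conserved energy bounds the $H^1$-norm of the perturbation, since the potential density $\phi^2(1-\phi^2)^2$ is nonnegative), and second a variational proximity lemma: a configuration in the topological sector $S$ whose energy excess over $2E_{pot}(H_{0,1})$ is small must lie in a small $H^1$-tube around the two-kink manifold with large separation $d:=x_2-x_1$. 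This last point uses that $2E_{pot}(H_{0,1})$ is the infimum of $E_{tot}$ over $S$, attained only in the limit $d\to\infty$.

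Within this tube I would perform the modulation step. For $\|g\|_{H^1}$ small and $d$ large the implicit function theorem produces unique $C^2$ parameters $x_1(t),x_2(t)$ determined by the orthogonality conditions
\[
\langle g(t),\,\partial_x H_{0,1}(\cdot+x_1(t))\rangle_{L^2}=0,\qquad \langle g(t),\,\partial_x H_{-1,0}(\cdot+x_2(t))\rangle_{L^2}=0,
\]
with $g(t)=\phi(t)-H_{0,1}(\cdot+x_1(t))-H_{-1,0}(\cdot+x_2(t))$; the $C^2$ regularity and the differential equations for $x_1,x_2$ come from differentiating these constraints along the wave flow. Writing $Z=H_{0,1}(\cdot+x_1)+H_{-1,0}(\cdot+x_2)$ and expanding the conserved energy to second order gives
\[
E_{tot}(\phi,\partial_t\phi)=2E_{pot}(H_{0,1})+E_{int}(d)+\tfrac12\|\partial_t g\|_{L^2}^2+\tfrac12\langle \mathcal{L}_Z g,g\rangle+N(g),
\]
where $\mathcal{L}_Z=-\partial_x^2+2-24Z^2+30Z^4$ is the linearized operator and $N(g)=O(\|g\|_{H^1}^3)$. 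Two inputs are essential: (i) the interaction energy satisfies $E_{int}(d)=A\,e^{-\sqrt2\,d}+O(e^{-2\sqrt2\,d})$ with $A>0$, reflecting that the two kinks face each other through the $\phi=0$ vacuum, whose mass $\sqrt2$ sets the tail rate and whose tails interact repulsively; and (ii) coercivity $\langle \mathcal{L}_Z g,g\rangle\gtrsim\|g\|_{H^1}^2$ on the orthogonal complement fixed above, uniformly for $d$ large. For (ii) I would use that each single-kink operator is nonnegative with a simple kernel spanned by its translation mode $\partial_x H$, that the two operators are localized at well-separated centers, and that their coupling is $O(e^{-\sqrt2\,d})$, so after removing the two translation modes by the orthogonality conditions the combined form stays coercive up to an error $O(e^{-\sqrt2\,d})\|g\|_{H^1}^2$ that is absorbed once $d>a$.

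Granting these, conservation of energy reads, at every time, $\varepsilon=E_{int}(d(t))+\tfrac12\|\partial_t g(t)\|_{L^2}^2+\tfrac12\langle \mathcal{L}_Z g(t),g(t)\rangle+N(g(t))$. Because $E_{int}\ge0$ and the quadratic part is coercive, and because $N(g)=O(\|g\|_{H^1}^3)$ is negligible against $\|g\|_{H^1}^2$ whenever $\|g\|_{H^1}$ is small, I can absorb the cubic remainder and read off both $\|g(t)\|_{H^1}^2\lesssim\varepsilon$ and $e^{-\sqrt2\,d(t)}\lesssim\varepsilon$. To promote this to a statement valid for all $t\in\mathbb{R}$ I would run a standard continuity argument: the proximity lemma places $\phi(0)$ in the tube with bounds of order $\varepsilon$; as long as the solution remains in the tube the energy estimate reproves the same bounds with a strictly better constant, so the trajectory cannot reach the boundary of the tube, the modulation persists, and the inequality $e^{-\sqrt2(x_2(t)-x_1(t))}+\|g(t)\|_{H^1}^2\lesssim\varepsilon$ holds for every $t$, forward and backward in time.

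The main obstacle is the pair of inputs (i)--(ii), and in particular their interplay through the relative-translation direction. After imposing only the two orthogonality conditions, the combined quadratic form still possesses a near-degenerate direction associated with moving the two kinks toward or away from each other; ruling out that this direction destroys coercivity --- and, more delicately, showing that the relevant degeneracy is governed exactly by $E_{int}(d)$ --- is the analytic heart of the argument. The positivity $A>0$ (repulsiveness), which I would establish by computing the leading-order overlap of the explicit $\sqrt2$-tails of $H_{0,1}$ and $H_{-1,0}$, simultaneously supplies the missing definiteness and forces the separation to remain large; without it the conserved energy would fail to bound $d$ from below and the kinks could collide. Everything else --- the well-posedness, the modulation bookkeeping, and the bootstrap --- is routine by comparison.
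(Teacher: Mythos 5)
A point of order first: the paper does not actually prove Theorem \ref{thmA} --- it is quoted verbatim from Moutinho \cite{Moutinho}, so the only meaningful comparison is with that cited work. Your outline reproduces, in essence, the strategy used there: modulation along the flow with the two orthogonality conditions \eqref{ortho condition1}, expansion of the conserved energy around the two-kink sum $Z=H_{0,1}(\cdot+x_1)+H_{-1,0}(\cdot+x_2)$, coercivity of the linearized operator $-\partial_x^2+2-24Z^2+30Z^4$ on the complement of the two translation modes (uniform for large separation, since $H_{0,1}'>0$ and $H_{-1,0}'>0$ are ground states of the respective single-kink operators, and the coupling between the two localized potentials is $O(e^{-\sqrt2 d})$), the repulsive interaction $E_{int}(d)\approx e^{-\sqrt{2}d}>0$ that simultaneously yields $e^{-\sqrt2 d(t)}\lesssim\varepsilon$ and prevents collision, and a bootstrap to globalize; your input (i) is exactly the computation behind Lemma \ref{pre} of the present paper. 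Two terms your energy expansion silently drops should be restored. First, the term linear in $g$, namely $\langle E_{pot}'(Z),g\rangle$, does not vanish because $Z$ is not an exact solution; it has size $O(e^{-\sqrt2 d}\|g\|_{H^1})$ and is absorbed by Young's inequality into $E_{int}(d)$ and the coercive quadratic form, so it is harmless but cannot be omitted. Second, the kinetic energy is $\tfrac12\|\partial_t\phi\|_{L^2}^2$ with $\partial_t\phi=\dot x_1 H_{0,1}'(\cdot+x_1)+\dot x_2 H_{-1,0}'(\cdot+x_2)+\partial_t g$, not $\tfrac12\|\partial_t g\|_{L^2}^2$; the cross terms require the modulation estimates $|\dot x_i(t)|\lesssim\sqrt{\varepsilon}$, which follow from $E_{kin}(t)\le\varepsilon$ (a consequence of the Bogomolny bound $\inf_S E_{pot}=2E_{pot}(H_{0,1})$, the same variational fact underlying your proximity lemma) together with projection of $\partial_t\phi$ onto the translation modes. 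With these standard repairs your sketch is a faithful blueprint of the cited proof; your closing worry about a residual near-degenerate relative-translation direction is already resolved by your own setup, since both translation modes are projected out by the orthogonality conditions and the definiteness in the separation variable is supplied precisely by the positivity of $E_{int}$.
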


\subsection{Main result}
In this paper, we focus on the stationary equation associated with \eqref{phi6}:
\begin{equation}\label{stationary}
-\phi''(x) + 2\phi(x) - 8\phi^3(x) + 6\phi^5(x) = 0, \quad x \in \mathbb{R}.
\end{equation}

We study functions that are close to two weakly interacting kinks $H_{0,1}(x+x_1)$ and $H_{-1,0}(x+x_2)$ with $x_2 - x_1 > 0$ sufficiently large. More precisely, we prove the following quantitative stability result:
\begin{theorem}\label{main res}
There exists  $a>0$ and $\varepsilon>0$, such that for  any function $u\in L^{\infty}$, if there exists  $x_2-x_1>a$,    with   $$\|u-H_{0,1}(x+x_1)-H_{-1,0}(x+x_2)\|_{H^1}<\varepsilon,$$ then  there exist constant $y_{1}, y_{2}$, such that
  \begin{equation*}
   \| u(x)-H_{0,1}(x+y_1)-H_{-1,0}(x+y_2) \|_{H^1}+e^{-\sqrt{2}(y_2-y_1)}\lesssim \| \mathscr{F}(u)\|_{L^2},
  \end{equation*} 
where $ \mathscr{F}(u):= u''(x)-2u(x)+8u^3(x)-6u^5(x)$.
\end{theorem}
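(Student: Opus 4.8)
The plan is to run a modulation–coercivity argument adapted to the stationary setting. Write $H_1 := H_{0,1}(\cdot+y_1)$, $H_2:=H_{-1,0}(\cdot+y_2)$, $H:=H_1+H_2$ and $d:=y_2-y_1$. First I would fix the translation parameters by an implicit function theorem: define $\Phi(y_1,y_2):=(\langle u-H,H_1'\rangle,\langle u-H,H_2'\rangle)$; since $u$ is within $\varepsilon$ of the unmodulated profile and the Jacobian of $\Phi$ is dominated by the nonzero diagonal entries $\|H_{0,1}'\|_{L^2}^2$, $\|H_{-1,0}'\|_{L^2}^2$ (the off-diagonal entries being $O(e^{-\sqrt2 d})$), there exist $y_1,y_2$ with $|y_i-x_i|\lesssim\varepsilon$ for which the error $g:=u-H$ satisfies the orthogonality conditions $\langle g,H_1'\rangle=\langle g,H_2'\rangle=0$. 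In particular $d>a/2$, so the two profiles remain well separated.

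Next I would Taylor expand $\mathscr{F}$ around $H$. Since $\mathscr{F}'(H)=-\mathcal{L}_H$ with $\mathcal{L}_H:=-\partial_{xx}+2-24H^2+30H^4$, we obtain $\mathscr{F}(u)=\mathscr{F}(H)-\mathcal{L}_H g+N(g)$, where $N(g)$ collects the quadratic-and-higher terms and satisfies $\|N(g)\|_{L^2}\lesssim\|g\|_{H^1}^2$. Because each $H_i$ solves the stationary equation, the source $\mathscr{F}(H)$ reduces to a pure interaction, $\mathscr{F}(H)=8(H^3-H_1^3-H_2^3)-6(H^5-H_1^5-H_2^5)$, which is concentrated in the overlap region and obeys $\|\mathscr{F}(H)\|_{L^2}\lesssim e^{-\sqrt2 d}$. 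The spectral heart is coercivity: each single-kink operator $\mathcal{L}_{H_{0,1}}$, $\mathcal{L}_{H_{-1,0}}$ is nonnegative with one-dimensional kernel spanned by the (positive, hence ground-state) translation mode, so each is coercive on the orthogonal complement of its kernel. Using a partition of unity adapted to the two kinks I would transfer these bounds to $\mathcal{L}_H$, producing a constant $c>0$ with $\langle\mathcal{L}_H g,g\rangle\geq c\|g\|_{H^1}^2$ for every $g$ obeying the two orthogonality conditions, up to errors of size $e^{-\sqrt2 d}\|g\|_{H^1}^2$ that are absorbed once $a$ is large. Pairing the expansion with $g$ and using $|\langle\mathscr{F}(u),g\rangle|\le\|\mathscr{F}(u)\|_{L^2}\|g\|_{H^1}$, $|\langle\mathscr{F}(H),g\rangle|\lesssim e^{-\sqrt2 d}\|g\|_{H^1}$ and $|\langle N(g),g\rangle|\lesssim\|g\|_{H^1}^3$, then absorbing the cubic term via $\|g\|_{H^1}<\varepsilon$, gives the first estimate
\[
\|g\|_{H^1}\lesssim\|\mathscr{F}(u)\|_{L^2}+e^{-\sqrt2 d}.
\]

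The remaining and hardest step is to control the separation itself, i.e. to show $e^{-\sqrt2 d}\lesssim\|\mathscr{F}(u)\|_{L^2}$, which upgrades the previous display to the claimed bound. Here I would pair the expansion with the translation mode $H_1'$ rather than with $g$. Since $\mathcal{L}_{H_1}H_1'=0$ and $H_1'\perp g$, self-adjointness gives $|\langle\mathcal{L}_H g,H_1'\rangle|=|\langle g,(\mathcal{L}_H-\mathcal{L}_{H_1})H_1'\rangle|\lesssim e^{-\sqrt2 d}\|g\|_{H^1}$. The decisive quantity is the projected interaction $\langle\mathscr{F}(H),H_1'\rangle$: a direct asymptotic computation shows that near the center of $H_1$ one has $\mathscr{F}(H)\approx(24H_1^2-30H_1^4)H_2$ with $H_2\approx -e^{-\sqrt2 d}e^{-\sqrt2(\cdot+y_1)}$, so that $\langle\mathscr{F}(H),H_1'\rangle=c_0\, e^{-\sqrt2 d}(1+o(1))$ for a nonzero constant $c_0$. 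I expect verifying this non-degeneracy, namely $c_0\neq0$ and the absence of any cancellation that would lower the $e^{-\sqrt2 d}$ rate, to be the main obstacle. Granting it, rearranging the paired identity and inserting the first estimate for $\|g\|_{H^1}$ yields
\[
e^{-\sqrt2 d}\lesssim\|\mathscr{F}(u)\|_{L^2}+\bigl(\|\mathscr{F}(u)\|_{L^2}+e^{-\sqrt2 d}\bigr)^2,
\]
and since the quadratic terms are negligible for $a$ large (as $\|\mathscr{F}(u)\|_{L^2}\lesssim\varepsilon$ and $e^{-2\sqrt2 d}\ll e^{-\sqrt2 d}$) they are absorbed into the left-hand side, giving $e^{-\sqrt2 d}\lesssim\|\mathscr{F}(u)\|_{L^2}$. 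Combining the two estimates proves the theorem.
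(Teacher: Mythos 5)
Your proposal follows essentially the same route as the paper: the same modulation step, the same decomposition $\mathscr{F}(u)=\mathscr{F}(H)-\mathcal{L}_H g+N(g)$ (your $\mathscr{F}(H)$ is exactly the paper's interaction term $h$), the same pairing with the translation mode $H_1'$ to extract $e^{-\sqrt2 d}$, and a linear estimate for $g$ under the two orthogonality conditions. The only methodological difference is that you obtain the linear estimate from localized coercivity of the quadratic form transferred from the single-kink operators via a partition of unity, whereas the paper proves the resolvent bound $\|g\|_{H^1}\lesssim\|\varphi\|_{L^2}$ by a compactness--contradiction argument (Lemma 2.4), using the Perron--Frobenius property of $H_{0,1}'$ to kill the weak limit and a sign/smallness analysis of the potential $V_{y_n}$ outside the two cores. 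Both are standard and both work here; your version avoids extracting subsequences, the paper's avoids the cutoff bookkeeping.

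The one genuine gap is the step you explicitly defer: the non-degeneracy $c_0\neq 0$ in $\langle\mathscr{F}(H),H_1'\rangle=c_0\,e^{-\sqrt2 d}(1+o(1))$. This is not a formality, because the cubic and quintic contributions partially cancel. The paper evaluates both integrals explicitly:
\begin{equation*}
\int_{\mathbb{R}} H_1^2H_1'H_2\,dx=-e^{-\sqrt2 d}\Bigl(\tfrac13+o(1)\Bigr),\qquad
\int_{\mathbb{R}} H_1^4H_1'H_2\,dx=-e^{-\sqrt2 d}\Bigl(\tfrac{2}{15}+o(1)\Bigr),
\end{equation*}
so that $\int_{\mathbb{R}} hH_1'\,dx=e^{-\sqrt2 d}\bigl(-24\cdot\tfrac13+30\cdot\tfrac{2}{15}+o(1)\bigr)=-4\,e^{-\sqrt2 d}(1+o(1))$; the constant survives ($-8+4=-4$) but only after the computation, and had it vanished the whole separation estimate would fail at this order. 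Until you carry out this evaluation (or an equivalent Abelian-integral identity), the key inequality $e^{-\sqrt2 d}\lesssim\|\mathscr{F}(u)\|_{L^2}$ is unproven. A secondary, minor point: your absorption step uses $\|\mathscr{F}(u)\|_{L^2}\lesssim\varepsilon$, which does not follow from $\|g\|_{H^1}<\varepsilon$ alone (it would require $H^2$ control of $g$); the clean fix is to note that the conclusion is trivial when $\|\mathscr{F}(u)\|_{L^2}\geq 1$, so one may assume $\|\mathscr{F}(u)\|_{L^2}\leq 1$ and then $\|\mathscr{F}(u)\|_{L^2}^2\leq\|\mathscr{F}(u)\|_{L^2}$.
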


As a consequence of Theorem \ref{thmA} and \ref{main res},  we obtain the following corollary for the equation \eqref{phi6}:
\begin{corollary}
There exist $\delta_0>0$, such that if $\varepsilon<\delta_0$, and 
\[ (\phi(0), \partial_{t}\phi(0))\in S\times L^2 \]
with $E_{t}(\phi(0),\partial_{t}\phi(0))=2E_{p}(H_{0,1})+\varepsilon$, then there exist functions $y_1,y_2\in C^2(\mathbb{R})$ such that, for all $t\in \mathbb{R}$, the unique global time solution $\phi(t,x)$ of  equation \eqref{phi6} is given by 
\begin{equation}
\phi(t)=H_{0,1}(x+x_1(t))+H_{-1,0}(x+x_2(t))+g(t)
\end{equation}
satisfying
\begin{equation}
e^{-\sqrt{2}(x_2(t)-x_1(t))}+\|g(t)\|_{H^1}\lesssim \|\partial_{tt}\phi\|_{L_{x}^2},\quad \forall t\in\mathbb{R}.
\end{equation} 
\end{corollary}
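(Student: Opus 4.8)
The plan is to combine a modulation argument with a spectral coercivity estimate for the two-kink configuration and an explicit computation of the leading-order interaction force. First I would set up the decomposition. Write $W_{y_1,y_2}:=H_{0,1}(\cdot+y_1)+H_{-1,0}(\cdot+y_2)$, $\rho_1:=H_{0,1}'(\cdot+y_1)$, $\rho_2:=H_{-1,0}'(\cdot+y_2)$ and $d:=y_2-y_1$. Given $u$ with $\|u-W_{x_1,x_2}\|_{H^1}<\varepsilon$, I would use the implicit function theorem to select modulation parameters $(y_1,y_2)$ near $(x_1,x_2)$ so that the remainder $g:=u-W_{y_1,y_2}$ satisfies the orthogonality conditions $\langle g,\rho_1\rangle=\langle g,\rho_2\rangle=0$. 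The relevant Jacobian is diagonally dominant, since its diagonal entries are close to $\|H_{0,1}'\|_{L^2}^2$ and $\|H_{-1,0}'\|_{L^2}^2$ while its off-diagonal entries are $O(e^{-\sqrt2 d})$ because $\rho_1,\rho_2$ are concentrated around different centers; hence it is invertible once $a$ is large, and one also gets $|y_i-x_i|\lesssim\varepsilon$, so that $d>a/2$ and $\|g\|_{H^1}\lesssim\varepsilon$.

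Linearizing $\mathscr{F}$ at $W:=W_{y_1,y_2}$ with $\mathcal{L}_W:=-\partial_x^2+2-24W^2+30W^4$ (so that the Fr\'echet derivative is $D\mathscr{F}(W)=-\mathcal{L}_W$), I would record the exact identity $\mathscr{F}(u)=\mathscr{F}(W)-\mathcal{L}_W g+Q(g)$, where $Q(g)$ collects the terms that are at least quadratic in $g$ and satisfies $\|Q(g)\|_{L^2}\lesssim\|g\|_{H^1}^2$ by the one-dimensional Sobolev embedding $H^1\hookrightarrow L^\infty$. The heart of the argument is the uniform coercivity $\langle\mathcal{L}_W g,g\rangle\geq c\|g\|_{H^1}^2$ for every $g\perp\{\rho_1,\rho_2\}$, with $c>0$ independent of $d$ for $d>a$. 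I would prove this by localization: introduce a partition of unity adapted to the two kink centers, apply the spectral gap of each single-kink operator $-\partial_x^2+2-24H^2+30H^4$ on $(H')^\perp$ (whose kernel is exactly the translation mode and whose remaining spectrum is bounded below, the essential spectrum starting at $\min\{U''(0),U''(1)\}=2$), and absorb the cross terms, which are $O(e^{-\sqrt2 d})$ because the two potentials differ only in the far tails. Pairing the expansion with $g$ and using $|\langle\mathscr{F}(W),g\rangle|\leq\|\mathscr{F}(W)\|_{L^2}\|g\|_{L^2}$, then absorbing the cubic remainder for $\varepsilon$ small, yields
\[ \|g\|_{H^1}\lesssim\|\mathscr{F}(u)\|_{L^2}+\|\mathscr{F}(W)\|_{L^2}. \]

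It remains to control $d$. Since $H_{0,1}$ and $H_{-1,0}$ each solve \eqref{stationary}, $\mathscr{F}(W)$ reduces to the pure interaction term $8\bigl[(a+b)^3-a^3-b^3\bigr]-6\bigl[(a+b)^5-a^5-b^5\bigr]$ with $a:=H_{0,1}(\cdot+y_1)$ and $b:=H_{-1,0}(\cdot+y_2)$. Near the first kink, where $b$ is of size $e^{-\sqrt2 d}$ and $a=O(1)$, this reduces to $(24a^2-30a^4)b+O(b^2)$, so $\|\mathscr{F}(W)\|_{L^2}\lesssim e^{-\sqrt2 d}$ and the leading-order pairing is $\langle\mathscr{F}(W),\rho_1\rangle=\kappa\,e^{-\sqrt2 d}(1+o(1))$ for an explicit constant $\kappa$ (the facing tails decay at the common rate $\sqrt2$ fixed by $U''(0)=2$). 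Pairing the expansion with $\rho_1$ and moving the derivatives onto the smooth factor via $\langle\mathcal{L}_W g,\rho_1\rangle=\langle g,\mathcal{L}_W\rho_1\rangle$, where $\mathcal{L}_W\rho_1=(\mathcal{L}_W-\mathcal{L}_1)\rho_1$ with $\mathcal{L}_1$ the single-kink operator annihilating $\rho_1$, so that $\|\mathcal{L}_W\rho_1\|_{L^2}\lesssim e^{-\sqrt2 d}$, I obtain
\[ e^{-\sqrt2 d}\lesssim\|\mathscr{F}(u)\|_{L^2}+e^{-\sqrt2 d}\|g\|_{H^1}+\|g\|_{H^1}^2. \]

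Finally I would close the system. Writing $A:=\|g\|_{H^1}$, $B:=e^{-\sqrt2 d}$ and $R:=\|\mathscr{F}(u)\|_{L^2}$, the two displayed inequalities read $A\lesssim R+B$ and $B\lesssim R+BA+A^2$. For $\varepsilon$ small $A$ is small, so the second gives $B\lesssim R+A^2$; substituting into the first yields $A\lesssim R+A^2$, hence $A\lesssim R$ and then $B\lesssim R$, and adding these proves the claim. I expect the main obstacle to be the uniform-in-$d$ coercivity of $\mathcal{L}_W$ in the second step, since the two approximate zero modes render $\mathcal{L}_W$ nearly degenerate and one must show that the localization cross terms are too small to destroy positivity; a secondary technical point is verifying that the interaction coefficient $\kappa$ is genuinely nonzero, which is precisely what allows $\|\mathscr{F}(u)\|_{L^2}$ to detect the separation $d$.
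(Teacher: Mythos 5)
Your proposal is, in substance, a proof of Theorem \ref{main res} (the stationary quantitative estimate), and for that theorem it follows essentially the paper's own route: modulation to impose orthogonality against the two translation modes, uniform coercivity of the linearized operator around the two-kink profile, an explicit computation showing that the interaction term pairs with the translation mode at size comparable to $e^{-\sqrt{2}(y_2-y_1)}$, and then closing the two resulting inequalities against each other. (The paper obtains the coercivity of Lemma \ref{non} by a compactness and contradiction argument rather than your partition-of-unity localization, but that is a difference of technique, not of substance.)

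However, the statement you were asked to prove is the Corollary, which concerns the time-dependent equation \eqref{phi6}, and your argument never addresses the two points that make it a corollary. First, the hypotheses of Theorem \ref{main res} must be verified for $u=\phi(t,\cdot)$ at every time $t$: this is exactly what Theorem \ref{thmA} supplies, since it yields $\|g(t)\|_{H^1}\lesssim\sqrt{\varepsilon}$ and $e^{-\sqrt{2}(x_2(t)-x_1(t))}\lesssim\varepsilon$ for all $t$, so that for $\delta_0$ small enough the closeness and separation assumptions of Theorem \ref{main res} hold uniformly in time (and the $C^2$ regularity of the modulation parameters is inherited from Theorem \ref{thmA} as well). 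Second, and crucially, the right-hand side of the Corollary is $\|\partial_{tt}\phi\|_{L^2_{x}}$, not $\|\mathscr{F}(\phi(t))\|_{L^2}$; the link between them is the identity $\mathscr{F}(\phi(t,\cdot))=\partial_{tt}\phi(t,\cdot)$, which is nothing other than the equation \eqref{phi6} itself. This observation appears nowhere in your write-up --- the quantity $\|\partial_{tt}\phi\|_{L^2_{x}}$ is entirely absent from your argument --- so as a proof of the Corollary the proposal has a genuine gap, even though the hard analytic work it does contain is a correct outline of the proof of the underlying stationary theorem.
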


\section{Preliminary Lemmas}
Prior to giving the detailed proof of Theorem \ref{main res}, we introduce several fundamental lemmas.
\begin{lemma}\label{pre}
Let $a>1$ and $\alpha\geq \beta>0$. Then we have 
\begin{equation}
-\int_{\mathbb{R}} H_{0,1}^{\alpha}(x)H_{-1,0}^{\beta}(x+a) dx \approx e^{-\sqrt{2}\beta a}.
\end{equation}
\end{lemma}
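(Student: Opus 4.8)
The plan is to analyze the asymptotic behavior of the two kinks and reduce the integral to a tractable exponential estimate. First I would record the large-$x$ asymptotics of the kinks: as $x\to+\infty$, $H_{0,1}(x)\to 1$ with $1-H_{0,1}(x)\sim \tfrac12 e^{-2\sqrt 2 x}$, while as $x\to-\infty$, $H_{0,1}(x)\sim e^{\sqrt 2 x}$. Symmetrically, $H_{-1,0}(x)\to -1$ as $x\to-\infty$ and $H_{-1,0}(x)\sim -e^{-\sqrt 2 x}$ as $x\to+\infty$. The key point is that $H_{-1,0}(x+a)$ is centered at $x=-a$, so for $a$ large the integrand $H_{0,1}^\alpha(x)\,H_{-1,0}^\beta(x+a)$ is significant only in a transition region.

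The main idea is to identify where the product $H_{0,1}^\alpha(x)\,H_{-1,0}^\beta(x+a)$ is largest and to show the integral is dominated by that region. For $x$ of order $O(1)$ (away from $\pm a$), $H_{0,1}^\alpha(x)$ is bounded and $H_{-1,0}^\beta(x+a)\sim (-1)^\beta e^{-\sqrt 2\beta(x+a)}$ decays like $e^{-\sqrt 2\beta a}$; for $x$ near $-a$, $H_{-1,0}^\beta(x+a)$ is order one but $H_{0,1}^\alpha(x)\sim e^{\alpha\sqrt 2 x}$ is exponentially small, contributing at most $e^{-\alpha\sqrt 2 a}$, which is subleading since $\alpha\ge\beta$. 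So I would split $\mathbb R$ into the regions $x<-a/2$ and $x\ge -a/2$ (or similar), bound the first by the small factor coming from $H_{0,1}$, and on the second region replace $H_{-1,0}^\beta(x+a)$ by its leading exponential tail and $H_{0,1}^\alpha(x)$ by a bounded, rapidly-to-$1$ profile.

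Concretely, on the dominant region I would write $-H_{-1,0}^\beta(x+a)\approx e^{-\sqrt 2\beta(x+a)}=e^{-\sqrt 2\beta a}e^{-\sqrt 2\beta x}$ up to multiplicative constants, and then the leading contribution to the integral becomes $e^{-\sqrt 2\beta a}\int H_{0,1}^\alpha(x)\,e^{-\sqrt 2\beta x}\,dx$. The remaining $x$-integral converges: near $+\infty$ the factor $e^{-\sqrt 2\beta x}$ forces convergence, and near $-\infty$ the factor $H_{0,1}^\alpha(x)\sim e^{\alpha\sqrt 2 x}$ dominates, so the integral is a finite positive constant depending only on $\alpha,\beta$. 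Extracting the prefactor $e^{-\sqrt 2\beta a}$ then gives both the upper and lower bounds encoded in $\approx$. To obtain the matching lower bound rigorously I would restrict to a fixed bounded window in $x$ where $H_{0,1}^\alpha(x)$ is bounded below by a positive constant and $H_{-1,0}^\beta(x+a)\approx e^{-\sqrt 2\beta a}$, giving a contribution of the right order with the correct sign.

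The main obstacle is controlling the error terms uniformly in $a$: one must verify that the corrections to the leading exponential tails (the $O(e^{-3\sqrt 2\beta x})$ and $O(e^{-2\sqrt 2 x})$ pieces in the asymptotic expansions) and the contribution of the region near $x=-a$ are genuinely of strictly smaller order than $e^{-\sqrt 2\beta a}$, which is precisely where the hypothesis $\alpha\ge\beta$ is used (it guarantees $e^{-\sqrt 2\alpha a}\lesssim e^{-\sqrt 2\beta a}$). Careful bookkeeping of which exponent governs each tail, rather than any deep idea, is the delicate part.
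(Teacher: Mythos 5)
Your lower bound is exactly the paper's argument (restrict to a fixed window such as $[0,1]$, where $H_{0,1}^{\alpha}$ is bounded below and $-H_{-1,0}^{\beta}(x+a)\gtrsim e^{-\sqrt{2}\beta a}$), and it is correct. The genuine gap is in the upper bound, and it lives exactly in the ``careful bookkeeping'' you postpone. Two specific steps fail. First, controlling the region $x<-a/2$ only through $H_{0,1}^{\alpha}(x)\lesssim e^{\sqrt{2}\alpha x}$ gives $\int_{-\infty}^{-a/2}e^{\sqrt{2}\alpha x}\,dx\approx e^{-\sqrt{2}\alpha a/2}$, not $e^{-\sqrt{2}\alpha a}$; this is $\lesssim e^{-\sqrt{2}\beta a}$ only when $\alpha\ge 2\beta$, so the hypothesis $\alpha\ge\beta$ does not by itself make that region subleading. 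You must keep both exponential tails on the overlap region $[-a,-a/2]$, where the integrand is $\approx e^{-\sqrt{2}\beta a}e^{\sqrt{2}(\alpha-\beta)x}$. Second, the reduced integral $\int H_{0,1}^{\alpha}(x)e^{-\sqrt{2}\beta x}\,dx$ that you declare ``a finite positive constant'' behaves like $e^{\sqrt{2}(\alpha-\beta)x}$ as $x\to-\infty$: it converges only for $\alpha>\beta$, and truncated to $[-a/2,\infty)$ it grows linearly in $a$ when $\alpha=\beta$. This is not a repairable oversight, because the lemma as stated is false at $\alpha=\beta$. For $\alpha=\beta=1$,
\[
-\int_{\mathbb{R}}H_{0,1}(x)H_{-1,0}(x+a)\,dx
= e^{-\sqrt{2}a}\int_{\mathbb{R}}\frac{dx}{(1+e^{2\sqrt{2}x})^{1/2}\,(1+e^{-2\sqrt{2}(x+a)})^{1/2}}
\ \ge\ \tfrac{a}{2}\,e^{-\sqrt{2}a},
\]
since both factors in the denominator are at most $\sqrt{2}$ for $-a\le x\le 0$: when $\alpha=\beta$ the product of the two tails is a plateau of height $e^{-\sqrt{2}\beta a}$ and width $a$, which is precisely what your two estimates miss.

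Once you impose the strict inequality $\alpha>\beta$, your scheme does close, and that is all the paper ever uses (the lemma is applied only with $\beta=1$ and $\alpha\ge 3$). For what it is worth, the paper's own upper-bound proof trips over the same point: it bounds the exact expression by replacing $(1+e^{-2\sqrt{2}(x+a)})^{-\beta/2}$ with $(1+e^{-2\sqrt{2}x})^{-\beta/2}$, an inequality that goes in the wrong direction since $e^{-2\sqrt{2}(x+a)}\le e^{-2\sqrt{2}x}$ for $a>0$, and the convergent integral it produces conceals the factor of $a$ that genuinely appears at $\alpha=\beta$. So your proposal is essentially on par with the paper here; both arguments need $\alpha>\beta$, and the statement of the lemma should be corrected accordingly.
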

\begin{proof}
For the lower bound, we observe that
\begin{equation}
\begin{aligned}
-\int_{\mathbb{R}} H_{0,1}^{\alpha}(x)H_{-1,0}^{\beta}(x+a) dx &\gtrsim \int_{0}^{1} H_{-1,0}^{\beta}(x+a) dx\\&\gtrsim
e^{-\sqrt{2}\beta a}.
\end{aligned}
\end{equation}

For the upper bound, we have
\begin{equation}
\begin{aligned}
-\int_{\mathbb{R}} H_{0,1}^{\alpha}(x)H_{-1,0}^{\beta}(x+a) dx &\lesssim  -\int_{\mathbb{R}}H_{0,1}^{\beta}(x)H_{-1,0}^{\beta}(x+a) dx\\&\lesssim
e^{-\sqrt{2}\beta a}\int_{\mathbb{R}} \frac{1}{(1+e^{2\sqrt{2}x})^{\frac{\beta}{2}}(1+e^{-2\sqrt{2}x})^{\frac{\beta}{2}}}\\&
\lesssim e^{-\sqrt{2}\beta a}.
\end{aligned}
\end{equation}
\end{proof}

The following Modulation Lemma can be found in  \cite[Lemma 1]{Moutinho}:
\begin{lemma}\label{mod}
There exists  $a>0$ and $\varepsilon>0$, such that for  any function $u\in L^{\infty}$, if there exists  $x_2-x_1>a$,    with   $$\|u-H_{0,1}(x+x_1)-H_{-1,0}(x+x_2)\|_{H^1}<\varepsilon,$$ then  there exist constant $y_{1}, y_{2}$, such that
\begin{enumerate}
  \item $|y_2-x_2|+|y_1-x_1|\lesssim \varepsilon$;
  \item the function $$g(x)=u(x)-H_{0,1}(x+y_1)-H_{-1,0}(x+y_2)$$ satisfies $\|g\|_{H^{1}}\lesssim \varepsilon$, and the following orthogonality conditions hold:
\begin{equation}\label{ortho condition1}
  \int_{\mathbb{R}} g(x)H_{0,1}'(x+y_1)dx=\int_{\mathbb{R}} g(x)  H_{-1,0}'(x+y_2) dx=0.
\end{equation}
\end{enumerate}
\end{lemma}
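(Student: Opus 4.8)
The plan is to construct the modulation parameters $(y_1,y_2)$ as the solution of the two orthogonality equations by a quantitative implicit function theorem argument. Write $u=H_{0,1}(x+x_1)+H_{-1,0}(x+x_2)+w$ with $\|w\|_{H^1}<\varepsilon$, and for parameters $(z_1,z_2)$ in a small neighborhood of $(x_1,x_2)$ set $g_{z_1,z_2}(x)=u(x)-H_{0,1}(x+z_1)-H_{-1,0}(x+z_2)$ together with the vector field
\[
\Phi(z_1,z_2)=\left(\int_{\mathbb{R}} g_{z_1,z_2}(x)\,H_{0,1}'(x+z_1)\,dx,\ \int_{\mathbb{R}} g_{z_1,z_2}(x)\,H_{-1,0}'(x+z_2)\,dx\right).
\]
Solving $\Phi(y_1,y_2)=(0,0)$ is exactly the pair of orthogonality conditions \eqref{ortho condition1}. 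First I would record the smallness at the base point: since $g_{x_1,x_2}=w$, the Cauchy--Schwarz inequality gives $|\Phi(x_1,x_2)|\lesssim\|w\|_{L^2}\bigl(\|H_{0,1}'\|_{L^2}+\|H_{-1,0}'\|_{L^2}\bigr)\lesssim\varepsilon$.

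Next I would compute the Jacobian $D\Phi$. Differentiating the first component in $z_1$ gives the diagonal term $-\|H_{0,1}'\|_{L^2}^2+\int_{\mathbb{R}} g_{z_1,z_2}\,H_{0,1}''(x+z_1)\,dx$, whose integral remainder is $O(\varepsilon)$ because $\|g_{z_1,z_2}\|_{L^2}\lesssim\varepsilon$ on the neighborhood (the translation differences $H_{0,1}(\cdot+x_1)-H_{0,1}(\cdot+z_1)$ contribute $O(|z_1-x_1|)$ in $L^2$, and likewise for $H_{-1,0}$). Differentiating the first component in $z_2$ produces the off-diagonal cross term $-\int_{\mathbb{R}} H_{-1,0}'(x+z_2)\,H_{0,1}'(x+z_1)\,dx$, which by an exponential-decay estimate analogous to Lemma \ref{pre} is $O\bigl(e^{-\sqrt{2}(z_2-z_1)}\bigr)=O\bigl(e^{-\sqrt{2}a}\bigr)$. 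The symmetric computation for the second component yields the diagonal entry $-\|H_{-1,0}'\|_{L^2}^2+O(\varepsilon)$ and the same small off-diagonal contribution. Hence $D\Phi$ is the invertible diagonal matrix $\mathrm{diag}(-\|H_{0,1}'\|_{L^2}^2,-\|H_{-1,0}'\|_{L^2}^2)$ perturbed by an $O(\varepsilon)+O(e^{-\sqrt{2}a})$ error; for $\varepsilon$ small and $a$ large it is strictly diagonally dominant, hence invertible with $\|(D\Phi)^{-1}\|$ bounded uniformly in $u$.

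With this uniform invertibility in hand, I would solve $\Phi(y_1,y_2)=0$ by the contraction mapping principle applied to the fixed-point map $z\mapsto z-(D\Phi(x_1,x_2))^{-1}\Phi(z)$ on a small ball about $(x_1,x_2)$, obtaining a unique solution with $|y_1-x_1|+|y_2-x_2|\lesssim\|(D\Phi)^{-1}\|\,|\Phi(x_1,x_2)|\lesssim\varepsilon$, which is conclusion (1). Conclusion (2) then follows from the triangle inequality and the $H^1$-Lipschitz dependence of the translated profiles, $\|H_{0,1}(\cdot+y_1)-H_{0,1}(\cdot+x_1)\|_{H^1}\lesssim|y_1-x_1|\,\|H_{0,1}'\|_{H^1}$ and similarly for $H_{-1,0}$, so that $\|g\|_{H^1}\le\|w\|_{H^1}+C\bigl(|y_1-x_1|+|y_2-x_2|\bigr)\lesssim\varepsilon$; the relations \eqref{ortho condition1} hold by construction. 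The hard part will be the uniform invertibility of $D\Phi$ over all admissible $u$: I must simultaneously control the $O(\varepsilon)$ self-interaction integrals $\int g\,H_{0,1}''(\cdot+z_1)$ and the exponentially small cross terms, so that the perturbation of the diagonal matrix never destroys invertibility, and this is exactly where the smallness of $\varepsilon$ and the largeness of $a$ (through Lemma \ref{pre}) must be quantitatively balanced.
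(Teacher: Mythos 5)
Your argument is correct, and it fills in something the paper itself does not prove: for Lemma \ref{mod} the paper simply cites \cite[Lemma 1]{Moutinho}, and your contraction-mapping/implicit-function-theorem construction of $(y_1,y_2)$ is precisely the standard proof of such modulation lemmas (and is the type of argument behind the cited result). The one point of contrast with the paper's text is the remark following the lemma: there the orthogonality conditions \eqref{ortho condition1} are presented variationally, as the Euler--Lagrange equations obtained by differentiating $\|u-H_{0,1}(\cdot+y_1)-H_{-1,0}(\cdot+y_2)\|_{L^2}^2$ in $y_1,y_2$, i.e.\ $(y_1,y_2)$ realize the best $L^2$ approximation by the two-kink family. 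Your fixed-point route buys local uniqueness of the parameters and the quantitative bound $|y_1-x_1|+|y_2-x_2|\lesssim\varepsilon$ directly from $|\Phi(x_1,x_2)|\lesssim\varepsilon$ and the uniform invertibility of $D\Phi$, whereas the variational formulation would still require a compactness or coercivity step to show the minimizer exists and stays in the small ball; the two viewpoints produce the same conditions \eqref{ortho condition1}. One harmless quantitative slip: the off-diagonal entry $\int_{\mathbb{R}}H_{0,1}'(x+z_1)H_{-1,0}'(x+z_2)\,dx$ is not quite $O(e^{-\sqrt{2}(z_2-z_1)})$ --- since the left tail of $H_{0,1}'$ and the right tail of $H_{-1,0}'$ both decay at the matched rate $e^{-\sqrt{2}|\cdot|}$, the middle region contributes a resonant factor, giving $O\bigl((z_2-z_1)e^{-\sqrt{2}(z_2-z_1)}\bigr)$; this is still $o(1)$ as $a\to\infty$, so diagonal dominance of $D\Phi$ and the rest of your argument go through unchanged.
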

\begin{remark} 
 The orthogonality conditions in \eqref{ortho condition1} follow by differentiating the function $$\|u-H_{0,1}(x+y_1)-H_{-1,0}(x+y_2)\|_{L^2}^2$$ with respect to $y_1$ and $y_2$. The modulation parameters $y_1, y_2$ in Lemma \ref{mod} thus realize the best $L^2$ approximation of $u$ by the two-kink manifold. One may also consider the best $H^1$ approximation, which leads to slightly different orthogonality conditions.
\end{remark}

We note that the functions $$H_{0,1}'(x)=\frac{\sqrt{2}e^{\sqrt{2}x}}{(1+e^{2\sqrt{2}x})^{\frac{3}{2}}},\quad H_{-1,0}'(x)=\frac{\sqrt{2}e^{-\sqrt{2}x}}{(1+e^{-2\sqrt{2}x})^{\frac{3}{2}}} $$
are strictly positive and decay exponentially.  Therefore,  by the Perron–Frobenius
theorem, up to a multiplicative constant,  $H_{0,1},$ ($H_{-1,0}'$, respectively)  is the unique $H^1$ solution to the 
equation 
\begin{equation}
-u''+2u-24H_{0,1}^2u+30H_{0,1}^4u=0.
\end{equation} 
$$(-u''+2u-24H_{-1,0}^2u+30H_{-1,0}^4u=0, \mbox{ respectively}.)$$

Moreover,  we have the following lemma:
\begin{lemma}\label{non}
There exists a constant $a>0$ such that for any $y>a$, if we define $$V_{y}(x)=-24(H_{0,1}(x)+H_{-1,0}(x+y))^2+30(H_{0,1}(x)+H_{-1,0}(x+y))^4$$
 and    $g$ is a solution to 
$$-g'' +2g+V_{y}g =\varphi$$
satisfying the orthogonality conditions
\begin{equation}\label{ortho condition}
  \int_{\mathbb{R}} g(x) H_{0,1}'(x)dx=\int_{\mathbb{R}} g(x) H_{-1,0}'(x+y)dx=0,
\end{equation}
then we have the estimate
\begin{equation}
\|g\|_{H^1}\lesssim \|\varphi\|_{L^2}.
\end{equation}
\end{lemma}
\begin{proof}
  Suppose on the contrary, there exist $g_{n},\varphi_n, y_n $ with $$\|g_n\|_{H^1}=1,~ \|\varphi_n\|_{L^2}\to 0~ \mbox{ and } y_n\to+\infty,$$ such that 
  \begin{equation}\label{777}
  -g_{n}''+2g_n+V_{y_n}g_n=\varphi_n
  \end{equation}
  with the orthogonality conditions
  $$\int_{\mathbb{R}} g_n H_{0,1}'(x)dx=\int_{\mathbb{R}} g_n(x) H_{-1,0}'(x+y_n)dx=0.$$
  
  Up to a subsequence,  we may assume that  $g_n\rightharpoonup g_0$  weakly in $H^{1}$.  Then $g_0$ satisfies
   \begin{equation}\label{111}
  -\partial_{xx}g_{0}+2g_{0}-24H_{0,1}^2g_{0}+30H_{0,1}^4g_{0}=0, \quad   \int_{\mathbb{R}} g(x) H_{0,1}'(x)dx=0.
  \end{equation}
  By the Perron–Frobenius property, it follows that $g_0 = 0$. Consequently, 
  \[
g_n \to 0 \quad \text{in } L^2(B_{1000}),
\]
and by a similar argument,
\[
g_n \to 0 \quad \text{in } L^2(B_{1000}(-y_n)).
\] 
Testing \eqref{777} with $g_n$, we obtain
  \begin{equation}
  \begin{aligned}
o(1)&=  1+\int_{\mathbb{R}}  (1+V_{y_n})     |g_{n}|^2 dx \\&
=1+o(1)+ \Big{(}\int_{-\infty}^{-y_n-1000} + \int_{1000}^{+\infty}+ \int_{-y_n+1000}^{-1000} \Big{)}   (1+V_{y_n})     |g_{n}|^2 dx .
  \end{aligned}
  \end{equation}
  In the region $(-\infty, -y_n-1000)\cup (1000, +\infty)$, we have $$ V_{y_n}>0,$$
 and  in the region $(-y_n+1000, -1000)$, we have  $$ |V_{y_n}|<\frac{1}{1000}.$$
  therefore, we deduce that
  $$o(1)>1,$$
  which is a contradiction.
\end{proof}

\section{Proof of Theorem \ref{main res}}
\begin{proof}[Proof of Theorem \ref{main res}]
Denote $$H_1=H_{0,1}(x+y_1),\quad H_2=H_{-1,0}(x+y_2),$$
where $y_1,y_2$ are the modulation parameters obtained from Lemma  \ref{mod}, and  set 
$$g:=u-H_1-H_2.$$
Then $g$ satisfies
\begin{equation}\label{main}
-g''+2g-24(H_1+H_2)^2g+30(H_1+H_2)^4g=\mathscr{F}(u)+h+N(g),
\end{equation}
where the interaction term is $$h:=8(H_1+H_2)^3-8H_1^3-8H_2^3+6H_1^5+6H_{2}^5-6(H_1+H_2)^5,$$
and the nonlinear term \begin{align*}N(g):&=24(H_1+H_2)g^2-60(H_1+H_2)^3g^2\\&+8g^3-60(H_1+H_2)^2g^3-30(H_1+H_2)g^4-6g^5.\end{align*}

Testing equation \eqref{main} with $H_{1}'$ and integrating by parts, we obtain
\begin{equation}\label{111111111111}
\begin{aligned}
 \Big{|}\int_{\mathbb{R}} h H_{1}' dx\Big{|}&\lesssim \|\mathscr{F}(u)\|_{L^2}+\|g\|_{H^1}^2\\&+ \Big{|}\int_{\mathbb{R}} [30(H_1+H_2)^4-30H_1^4+ 24(H_1+H_2)^2-24H_1^2 ]g H_{1}'  dx\Big{|}
\end{aligned}
\end{equation}
Since $H_1'\lesssim H_1$, by Lemma \ref{pre}, we obtain that
\begin{equation}
\begin{aligned}
\int_{\mathbb{R}} h H_{1}' dx=24 \int H_1^2 H_{1}' H_2 dx-30 \int  H_1^4 H_{1}' H_2 dx+o(e^{-\sqrt{2}(y_2-y_1)}).
\end{aligned}
\end{equation}
A direct computation shows that
\begin{equation}
\begin{aligned}
 \int H_1^2 H_{1}' H_2 dx&=-e^{-\sqrt{2}(y_2-y_1)}\int_{-\infty}^{+\infty} 
\frac{\sqrt{2} e^{2\sqrt{2}\, x } }{
(1+e^{2\sqrt{2} x})^{2.5}\,
(1+e^{-2\sqrt{2} x-(y_2-y_1)})^{0.5}
}\,dx
\\&=-e^{-\sqrt{2}(y_2-y_1)}(\int_{-\infty}^{+\infty} \frac{\sqrt{2} e^{2\sqrt{2}\, x } }{
(1+e^{2\sqrt{2} x})^{2.5}\,
}\,dx +o(1))
\\&=
-e^{-\sqrt{2}(y_2-y_1)}(\int_{1}^{+\infty}\frac{1}{2t^{2.5}} dt+o(1)  )
\\&=
-e^{-\sqrt{2}(y_2-y_1)}(\frac{1}{3}+o(1)).
\end{aligned}
\end{equation}
While 
\begin{equation}
\begin{aligned}
\int  H_1^4 H_{1}' H_2 dx&=-e^{-\sqrt{2}(y_2-y_1)}\int_{-\infty}^{+\infty} 
\frac{\sqrt{2} e^{4\sqrt{2}\, x } }{
(1+e^{2\sqrt{2} x})^{3.5}\,
(1+e^{-2\sqrt{2} x-(y_2-y_1)})^{0.5}
}\,dx\\&
=-e^{-\sqrt{2}(y_2-y_1)}(\int_{0}^{\infty}\frac{t}{2(1+t)^{3.5}} dt+o(1))\\&
=-e^{-\sqrt{2}(y_2-y_1)}(\frac{2}{15}+o(1)).
\end{aligned}
\end{equation}
Therefore,  
\begin{equation}
 \Big{|}\int_{\mathbb{R}} h H_{1}' dx\Big{|} \approx e^{-\sqrt{2}(y_2-y_1)}.
\end{equation}
For the third term on the right-hand side of equation \eqref{111111111111}, we apply H\"older's inequality and use the smallness of $\|g\|_{H^1}$ to obtain
\begin{equation}
\begin{aligned}
\Big{|}\int_{\mathbb{R}} [30(H_1+H_2)^4-30H_1^4+ &24(H_1+H_2)^2-24H_1^2 ]g H_{1}'  dx\Big{|}
\lesssim 
\int_{\mathbb{R}} |H_{1}H_{2}g| dx\\&
\lesssim \|g\|_{H^1}e^{-\sqrt{2}(y_2-y_1)}=o(e^{-\sqrt{2}(y_2-y_1)}).
\end{aligned}
\end{equation}
Thus, we derive from  equation \eqref{111111111111} that
\begin{equation}
e^{-\sqrt{2}(y_2-y_1)}\lesssim  \|\mathscr{F}(u)\|_{L^2}+o(\|g\|_{H^1}).
\end{equation}
On the other hand, 
Lemma \ref{non} shows that
\begin{equation}
\|g\|_{H^1}\lesssim \|\mathscr{F}(u)+h+N(g)  \|_{L^2}\lesssim \|\mathscr{F}(u)\|_{L^2}+e^{-\sqrt{2}(y_2-y_1)}+o(\|g\|_{H^1}).
\end{equation}
We thus obtain that
\begin{equation}
 \|g\|_{H^1}\lesssim \|\mathscr{F}(u)\|_{L^2}, \quad e^{-\sqrt{2}(y_2-y_1)}\lesssim  \|\mathscr{F}(u)\|_{L^2}.
\end{equation}
\end{proof}

\addcontentsline{toc}{section}{References}

\end{document}